\theoremstyle{plain}
\newtheorem{theorem}{Theorem}
\newtheorem{lemma}{Lemma}
\newtheorem*{theo*}{Theorem}
\newtheorem{proposition}{Proposition}
\newtheorem{corollary}{Corollary}
\theoremstyle{definition}
\newtheorem{definition}{Definition}
\newtheorem*{definition*}{Definition}
\newtheorem{example}{Example}
\newtheorem{remark}{Remark}
\begin{document}
\sloppy
\title[On finite-dimensional subalgebras of derivation algebras]{On finite-dimensional subalgebras of derivation algebras}


%
\begin{abstract}
Let $\mathbb{K}$ be a field, $R$ be an associative and commutative
$\mathbb{K}$-algebra and $L$ be a Lie algebra over $\mathbb{K}$.
We give some descriptions of injections from $L$ to Lie
algebra of $\mathbb{K}$-derivations of $R$ in the terms
of $L$.
\end{abstract}
\maketitle


\section{Introduction}

Let $\mathbb{K}$ be a field, $R$ be an associative and commutative
$\mathbb{K}$-algebra and $L$ be a Lie algebra over $\mathbb{K}$.
Let $Der_{\mathbb{K}}R$ be a Lie algebra of 
$\mathbb{K}$-derivations of the algebra $R$. 
We study subalgebras from $Der_{\mathbb{K}}R$. This
questions are studied in many papers (see \cite{Lie}, \cite{Now}, \cite{Arzh}, \cite{PBNL}).
Jan Draisma in his PhD. thesis (\cite{Dra}) studied so called transitive derivation
of $\mathbb{K}[x_1, \dots, x_n]$.

More precisely we study injective morphisms $\varphi$ from $L$ to $Der_{\mathbb{K}}R$.
$Der_{\mathbb{K}}R$ has a structure of $R$-module.
Put $M=R\varphi(L)\subset Der_{\mathbb{K}}R$. If $R$ is a field then
$M$ is a vector space over $R$. Put $k=Dim_R M$. We prove that if such injection exists then
$R\otimes L$ has an $R$-subalgebra of codimension $k$ and if $L$ is finite dimensional then $L$ has a
subalgebra of codimension $k$.

Given a Lie algebra $L$ and $k \in \mathbb{N}$.
We want to give an answer in terms of $L$ where there exist
a field $R$ and an injection $\varphi:L \rightarrow Der_{\mathbb{K}}R$
such that $Dim_R R \varphi{L}=k$.
Let $M_k(L)$ be an affine algebraic set of all elements $(l_1, \dots l_k)\in L^k$
such that a vector space $\mathbb{K}l_1+ \dots +\mathbb{K}l_k $ is a subalgebra
and $M^0_k(L)$ be a subset of $M_k(L)$ such that an elements $l_1, \dots l_k$ are linear dependent.
 The main result of this paper is the following Theorem.
 For a closed point $p$ of an affine algebraic variety we denote an evaluation of element $x$ from coordinate ring
 in $p$ by $p(x)$.

Theorem 1. Let $L$ be a finite dimensional Lie algebra over an algebraically closed field $\mathbb{K}$.
Then the following conditions are equivalent:

$(i)$ there exists a field $R$ and an injective homomorphism $\varphi: L \rightarrow Der_{\mathbb{K}}R$
such that $Dim_R R \varphi(L)=k$;

$(ii)$ there exist an irreducible subvariety of $V \subset M_k(L)$,
$V \nsubseteq M^0_k(L)$, such that an intersection of subalgebras associated
to points of $V$ is equal to $\{0\}$.

A technique of this paper gives some algorithms constructing injections. They
are used in examples.

\section{Construction of some Lie algebras}

Let $\mathbb{K}$ be a field, $R$ be associative and commutative $\mathbb{K}$-algebra
and $L$ be a Lie algebra over $\mathbb{K}$.
Consider a Lie algebra $Der_{\mathbb{K}}R$. Let $M$ be an $R$-submodule of $Der_{\mathbb{K}}R$
closed under Lie bracket and $\varphi: L \rightarrow M$ be an injective morphism of Lie algebras.
Consider a Lie algebra $R \otimes L$ which is a free $R$-module. For an arbitrary derivation
$d \in Der_{\mathbb{K}}R$ denote a derivation of the Lie algebra  $d'\in Der_{\mathbb{K}}(R \otimes L)$:
$d'(m \otimes l)=d(m) \otimes l$. Note that a mapping $d \rightarrow d'$ is a
monomorphism of Lie algebras. Thus we can define a following Lie algebra.

\begin{definition}
Let $L$ be a Lie algebra over a field $\mathbb{K}$, $R$ be an associative and commutative $\mathbb{K}$-algebra
and $M$ be an $R$-submodule of $Der_{\mathbb{K}}R$. Define the following Lie algebra
\[\mathfrak{D}=\mathfrak{D}(L, R, M)=M \rightthreetimes (R \otimes L).\]

\end{definition}

\begin{remark}
In the case $R=\mathbb{K}[x_1, \dots, x_n]$ $\mathfrak{D}$ is a polynomial Lie algebra that are
defined in \cite{Buh}.
\end{remark}

Denote an action of this algebra on the ring $R$: $(D + r \otimes l)(s)=D(s)$ for $D \in M$,
$l \in L$, $r,s \in R$.

\begin{lemma}
Let $d_1, d_2$ be an elements of $\mathfrak{D}$, $r,s$ be an elements of $R$. Then:
\[[r d_1, s d_2]=r d_1(s) d_2 -s d_2(r) d_1 +rs [d_1, d_2].\]
\end{lemma}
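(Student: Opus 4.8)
The plan is to reduce the identity to the definition of the semidirect-product bracket on $\mathfrak{D}=M \rightthreetimes (R\otimes L)$, the definition of the derivation $d\mapsto d'$, and the ordinary Leibniz rule in $Der_{\mathbb{K}}R$. Write $d_i=D_i+a_i$ with $D_i\in M$ and $a_i\in R\otimes L$. Since the $R$-module structure on $\mathfrak{D}$ is the obvious one, $rd_1=rD_1+ra_1$ and $sd_2=sD_2+sa_2$, so it suffices to expand $[rd_1,sd_2]$ into the four constituent pieces of the semidirect product: the bracket $[rD_1,sD_2]$ computed in $M\subseteq Der_{\mathbb{K}}R$, the two cross terms $(rD_1)'(sa_2)$ and $-(sD_2)'(ra_1)$, and the bracket $[ra_1,sa_2]$ computed in $R\otimes L$.

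First I would record the classical computation in $Der_{\mathbb{K}}R$: for $D_1,D_2\in Der_{\mathbb{K}}R$ and $r,s\in R$ one has $[rD_1,sD_2]=rD_1(s)D_2-sD_2(r)D_1+rs[D_1,D_2]$, which lies in $M$ because $M$ is an $R$-submodule of $Der_{\mathbb{K}}R$ closed under the bracket. Next I would prove the twisted Leibniz rule $(rD)'(sa)=rD(s)\,a+rs\,D'(a)$ for $a\in R\otimes L$: writing $a=\sum_j s_j\otimes l_j$, this follows term by term from $(rD)(ss_j)=rD(s)s_j+rsD(s_j)$ together with the definition $D'(m\otimes l)=D(m)\otimes l$. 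Finally, since the bracket on $R\otimes L$ multiplies only the $R$-components, $[r_1\otimes l_1,r_2\otimes l_2]=r_1r_2\otimes[l_1,l_2]$, one gets immediately $[ra_1,sa_2]=rs[a_1,a_2]$.

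Summing the four contributions and regrouping, the terms carrying the coefficient $rD_1(s)$ assemble into $rD_1(s)(D_2+a_2)=rD_1(s)\,d_2$, the terms carrying $sD_2(r)$ into $-sD_2(r)(D_1+a_1)=-sD_2(r)\,d_1$, and the remaining terms into $rs\bigl([D_1,D_2]+D_1'(a_2)-D_2'(a_1)+[a_1,a_2]\bigr)=rs[d_1,d_2]$. Since the action of $\mathfrak{D}$ on $R$ satisfies $d_i(s)=D_i(s)$, this is exactly $[rd_1,sd_2]=rd_1(s)d_2-sd_2(r)d_1+rs[d_1,d_2]$. I do not expect a genuine obstacle; the only point requiring care is the bookkeeping — keeping the $R$-module scaling on $\mathfrak{D}$ separate from the several Lie brackets, and verifying the twisted Leibniz identity, which is precisely the source of the coefficients $d_1(s)$ and $d_2(r)$.
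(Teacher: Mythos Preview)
Your proposal is correct and is exactly the ``straightforward calculation'' the paper has in mind: decompose $d_i=D_i+a_i$, use the Leibniz rule in $Der_{\mathbb{K}}R$, the rule $(rD)'(sa)=rD(s)a+rs\,D'(a)$, and $R$-bilinearity of the bracket on $R\otimes L$, then regroup. There is nothing to add.
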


\begin{proof}
Straightforward calculation.
\end{proof}

Let $\lbrace l_i \rbrace, i \in I$ be a basis of $L$. Denote a following monomorphism  from $L$
to $\mathfrak{D}$: $\varphi': l_i \mapsto \varphi (l_i) + 1 \otimes l_i, i \in I$. Note that
$\varphi'(L)$ is a Lie algebra isomorphic to $L$.

\begin{definition}
$\widetilde{L}= R\varphi'(L) \cap R \otimes L$.
\end{definition}

It is easy to see that $\widetilde{L}$ is a subalgebra of a Lie algebra $R \otimes L$
considered as an algebra \emph{over $R$}.

\begin{lemma}\label{elementsofsubalgebra}
Let $r_1, \dots , r_p$ be an elements of $R$. Then $r_1 \varphi (l_{i_1})+ \dots + r_p \varphi (l_{i_p})=0$
if and only if $r_1 \otimes l_{i_1} + \dots + r_p \otimes l_{i_p} \in \widetilde{L}$.
\end{lemma}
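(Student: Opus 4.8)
The plan is to first obtain an explicit description of $\widetilde{L}$ and then read off the equivalence by comparing components. As an $R$-module, the semidirect product $\mathfrak{D}=M\rightthreetimes(R\otimes L)$ is simply the direct sum $M\oplus(R\otimes L)$, where $M$ is an $R$-submodule of $Der_{\mathbb{K}}R$ and $R\otimes L$ is a \emph{free} $R$-module on the set $\{1\otimes l_i:i\in I\}$. The $R$-submodule $R\varphi'(L)\subset\mathfrak{D}$ consists exactly of the finite $R$-linear combinations
\[
\sum_j s_j\,\varphi'(l_j)=\sum_j s_j\,\varphi(l_j)\;+\;\sum_j s_j\otimes l_j,\qquad s_j\in R.
\]
Such an element lies in $R\otimes L$ precisely when its $M$-component vanishes, i.e. when $\sum_j s_j\,\varphi(l_j)=0$; hence
\[
\widetilde{L}=\Bigl\{\,\sum_j s_j\otimes l_j \;:\; s_j\in R,\ \sum_j s_j\,\varphi(l_j)=0\,\Bigr\}.
\]

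Given this description, the forward implication is immediate: if $r_1\varphi(l_{i_1})+\dots+r_p\varphi(l_{i_p})=0$, then $r_1\otimes l_{i_1}+\dots+r_p\otimes l_{i_p}$ has precisely the form appearing in the displayed description of $\widetilde{L}$, so it belongs to $\widetilde{L}$. For the converse, suppose $r_1\otimes l_{i_1}+\dots+r_p\otimes l_{i_p}\in\widetilde{L}=R\varphi'(L)\cap(R\otimes L)$ and write this element as $\sum_j s_j\,\varphi'(l_j)$ for suitable $s_j\in R$, almost all zero. Projecting onto the summand $R\otimes L$ and using that $\{1\otimes l_j\}$ is an $R$-basis of $R\otimes L$, I would match coefficients to conclude $s_{i_k}=r_k$ for each $k$ and $s_j=0$ for the remaining indices; projecting onto the summand $M$ then gives $\sum_j s_j\,\varphi(l_j)=0$, which is exactly $r_1\varphi(l_{i_1})+\dots+r_p\varphi(l_{i_p})=0$.

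There is essentially no hard step here; the only point requiring care is the module-theoretic bookkeeping — namely that $R\varphi'(L)$ denotes the $R$-\emph{span} of the $\varphi'(l_i)$ inside $\mathfrak{D}$ (not the $\mathbb{K}$-span or the Lie subalgebra generated by them), and that freeness of $R\otimes L$ over $R$ is what legitimizes the coefficient comparison in the converse direction. Once the decomposition $\mathfrak{D}=M\oplus(R\otimes L)$ of $R$-modules is in place, the whole statement is a direct inspection of the two components, and the Lie bracket plays no role.
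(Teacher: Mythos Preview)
Your proof is correct and follows essentially the same approach as the paper: both use the $R$-module decomposition $\mathfrak{D}=M\oplus(R\otimes L)$, write an element of $R\varphi'(L)$ as $\sum_j s_j\varphi(l_j)+\sum_j s_j\otimes l_j$, and invoke freeness of $R\otimes L$ to compare coefficients. Your preliminary step of extracting the explicit description $\widetilde{L}=\{\sum_j s_j\otimes l_j:\sum_j s_j\varphi(l_j)=0\}$ is a slight repackaging rather than a different argument.
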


\begin{proof}
Let $r_1 \varphi (l_{i_1})+ \dots + r_p \varphi (l_{i_p})=0$.
Then:
\[r_1 (\varphi (l_{i_1}) + 1 \otimes l_{i_1})+ \dots + r_p (\varphi (l_{i_p}) + 1 \otimes l_{i_p})=
 r_1 \otimes l_{i_1} + \dots + r_p \otimes l_{i_p} \in R\varphi'(L) \cap R \otimes L.\]

 Conversely let $r_1 \otimes l_{i_1} + \dots + r_p \otimes l_{i_p} \in \widetilde{L}$.
 Then:
 \[r_1 \otimes l_{i_1} + \dots + r_p \otimes l_{i_p}=\sum_{i \in I}(s_i\varphi(l_i) + s_i \otimes l_i),\]
 where only finite number of the elements $s_i$ are nonzero.
 But $\mathfrak{D} = M \oplus R \otimes L$ as $R$-module. Therefore $\sum_{i \in I}s_i\varphi(l_i) =0$
 and $\sum_{i \in I} s_i \otimes l_i=r_1 \otimes l_{i_1} + \dots + r_p \otimes l_{i_p}$.
 A set $\lbrace l_i \rbrace$ is a free basis of $R \otimes L$. Thus $r_j=s_{i_j}, j=1 \dots p$
 and all other $s_i$'s are equal to $0$. This completes the proof of Lemma.

\end{proof}

\begin{lemma}\label{intersection}
$\widetilde{L} \cap 1 \otimes L = \lbrace 0 \rbrace$.
\end{lemma}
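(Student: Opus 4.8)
The plan is to derive this immediately from Lemma~\ref{elementsofsubalgebra}, which already gives a clean characterization of the elements of $\widetilde{L}$ lying in $R\otimes L$. So I would start by taking an arbitrary element $x\in\widetilde{L}\cap(1\otimes L)$ and writing it as $x=1\otimes l$ for some $l\in L$. Expanding $l$ in the fixed basis $\{l_i\}_{i\in I}$ as $l=\sum_{j=1}^p c_j l_{i_j}$ with $c_j\in\mathbb{K}$ and using bilinearity of the tensor product over $\mathbb{K}$, I get $x=\sum_{j=1}^p c_j(1\otimes l_{i_j})=\sum_{j=1}^p(c_j\cdot 1)\otimes l_{i_j}$, i.e.\ $x$ has the form $r_1\otimes l_{i_1}+\dots+r_p\otimes l_{i_p}$ with $r_j=c_j\cdot 1\in R$ the images of the scalars under $\mathbb{K}\hookrightarrow R$.

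Next I would apply Lemma~\ref{elementsofsubalgebra} in the direction ``$\Leftarrow$'': since $x\in\widetilde{L}$, we must have $r_1\varphi(l_{i_1})+\dots+r_p\varphi(l_{i_p})=0$ in $\operatorname{Der}_{\mathbb{K}}R$. But $r_j=c_j\cdot 1$ acts on derivations as multiplication by the scalar $c_j$, so this relation reads $\sum_{j=1}^p c_j\,\varphi(l_{i_j})=\varphi\!\left(\sum_{j=1}^p c_j l_{i_j}\right)=\varphi(l)=0$, using that $\varphi$ is $\mathbb{K}$-linear. Since $\varphi$ is injective, $l=0$, hence $x=1\otimes l=0$. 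As the reverse inclusion $\{0\}\subseteq\widetilde{L}\cap(1\otimes L)$ is trivial, this finishes the argument.

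There is essentially no serious obstacle here; the only point requiring a little care is the interplay between the $R$-module structure and the $\mathbb{K}$-linear structure — namely that for $l\in L$ the element $1\otimes l$ is genuinely an $R$-linear combination of basis vectors with \emph{scalar} coefficients, so that Lemma~\ref{elementsofsubalgebra} applies and the resulting relation in $\operatorname{Der}_{\mathbb{K}}R$ collapses to the $\mathbb{K}$-linear relation $\varphi(l)=0$. Everything else is a direct invocation of the previous lemma together with injectivity of $\varphi$.
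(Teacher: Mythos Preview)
Your argument is correct and follows essentially the same route as the paper: from $1\otimes l\in\widetilde{L}$ one extracts the relation $\varphi(l)=0$ and concludes by injectivity of $\varphi$. The only difference is packaging---you invoke Lemma~\ref{elementsofsubalgebra} directly, whereas the paper reproves its content inline by unpacking the definition of $\widetilde{L}=R\varphi'(L)\cap(R\otimes L)$; your version is slightly cleaner for that reason.
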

\begin{proof}
  Assume that $0 \neq 1 \otimes l \in \widetilde{L} \cap 1 \otimes L$, i. e.
$1 \otimes l \in R \varphi'(L)$. Therefore for some nonzero $r_1, \dots, r_m \in R$ we have:
\[l=r_1(\varphi(l_{i_1})+ 1 \otimes l_{i_1})+ \dots + r_m(\varphi(l_{i_m})+ 1 \otimes l_{i_m}).\]
Therefore $r_1\varphi(l_{i_1})+ \dots + r_m\varphi(l_{i_m})=0$ and
$r_1 \otimes l_{i_1}+ \dots + r_m \otimes l_{i_m} =l$. Hence $r_j \in \mathbb{K}$, $j= 1, \dots, m$.
Thus $\varphi(r_1 l_{i_1} + \dots + r_m l_{i_m})=0$ with contradiction to injectivity of $\varphi$.
\end{proof}

 \section{Subalgebras of codimension $k$}
During this chapter $R$ will be a field.

\begin{proposition}
Let $L$ be a Lie algebra over $\mathbb{K}$, $\varphi: L \rightarrow Der_{\mathbb{K}}R$ be an injection,
$M=R\varphi(L)$ be a vector space of dimension $k$ over $R$. Then $\widetilde{L}$ is a
subalgebra of $R$-algebra $R \otimes L$ with codimension $k$.
\end{proposition}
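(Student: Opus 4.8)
The plan is to show that $\widetilde{L}$ has codimension exactly $k$ in the free $R$-module $R \otimes L$ by exhibiting an explicit $R$-linear surjection from $R \otimes L$ onto an $R$-vector space of dimension $k$ whose kernel is precisely $\widetilde{L}$. The natural candidate for this map is the extension of $\varphi$ to $R \otimes L$. First I would define $\Phi : R \otimes L \to M$ by $\Phi(r \otimes l) = r\varphi(l)$, extended $R$-linearly; this is well defined since $\varphi$ is $\mathbb{K}$-linear and $M = R\varphi(L)$ is an $R$-module. By the very definition $M = R\varphi(L)$, the map $\Phi$ is surjective onto $M$, and since $R$ is a field, $M$ is a vector space of dimension $k$ over $R$ by hypothesis.

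**Identifying the kernel.**

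The key step is then to observe that $\ker \Phi = \widetilde{L}$, and this is exactly what Lemma~\ref{elementsofsubalgebra} delivers: an element $\sum r_j \otimes l_{i_j}$ lies in $\ker \Phi$ iff $\sum r_j \varphi(l_{i_j}) = 0$ iff $\sum r_j \otimes l_{i_j} \in \widetilde{L}$. Thus $\Phi$ factors through an $R$-linear isomorphism $(R \otimes L)/\widetilde{L} \xrightarrow{\sim} M$, and since $\dim_R M = k$, we get $\operatorname{codim}_R \widetilde{L} = k$. That $\widetilde{L}$ is a subalgebra over $R$ has already been noted in the text right after its definition, so this does not need to be reproved here — though if one wants it self-contained, one can check that $\widetilde{L} = R\varphi'(L) \cap R \otimes L$ is an intersection of two $R$-submodules, the first of which is a subalgebra because $\varphi'(L)$ is, and Lemma~1 shows $R$-submodules closed under bracket are preserved under the $R$-action up to elements already in the module.

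**Anticipated obstacle.**

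The only genuine subtlety is checking that $\Phi$ is well defined and $R$-linear, i.e. that the formula $r \otimes l \mapsto r\varphi(l)$ respects the relations defining the tensor product $R \otimes_{\mathbb{K}} L$ — this is routine since $\varphi$ is $\mathbb{K}$-linear and multiplication in the $R$-module $M$ is $\mathbb{K}$-bilinear, so the universal property of the tensor product applies directly. One should also be slightly careful that "codimension $k$" is meant in the sense of $R$-vector spaces (equivalently, $\dim_R (R\otimes L)/\widetilde{L} = k$), which is automatic once we have the isomorphism with $M$; no finiteness of $\dim_{\mathbb{K}} L$ or $\dim_R(R\otimes L)$ is needed, only the finiteness of $k$ itself. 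So the proof is short: define $\Phi$, invoke Lemma~\ref{elementsofsubalgebra} to identify its kernel, and pass to the quotient.
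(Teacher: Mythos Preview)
Your proposal is correct and is essentially the same argument as the paper's: both hinge on Lemma~\ref{elementsofsubalgebra}, which in effect identifies $\widetilde{L}$ with the kernel of the $R$-linear extension $\Phi:R\otimes L\to M$, $r\otimes l\mapsto r\varphi(l)$. The paper phrases this by choosing $l_1,\dots,l_k\in L$ with $\varphi(l_1),\dots,\varphi(l_k)$ an $R$-basis of $M$ and checking directly that these give a complement to $\widetilde{L}$, while you pass to the quotient via the first isomorphism theorem---a slightly cleaner packaging of the same content.
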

\begin{proof}
Let $\lbrace l_1, \dots, l_k \rbrace\subset L$ be an elements of $L$ such that
 $\lbrace \varphi(l_1), \dots, \varphi(l_k) \rbrace$ form a basis of $M$. It is easy to see that
 $M$ is closed under multiplication.

 For any element of $l \in L$ and some elements $r_1, \dots, r_k$ we have:
 \[\varphi(l)+r_1 \varphi(l_1) + \dots + r_k\varphi(l_k)=0. \]
 Therefore using Lemma \ref{elementsofsubalgebra} we have that $l+r_1 l_1 + \dots + r_k l_k \in \widetilde{L}$.
 Also we have that $\lbrace \varphi(l_1), \dots, \varphi(l_k) \rbrace$ are linear independent over $R$.
 Hence using Lemma \ref{elementsofsubalgebra} we have that for any $r_1, \dots, r_k \in R$
 $r_1 l_1 + \dots + r_k l_k \notin \widetilde{L}$. Thus $\widetilde{L}$ has codimension $k$ in
 $R \otimes L$.
\end{proof}

\begin{lemma}\label{injectionconditions}
Let $R$ be a field, $L$ be a finite dimensional Lie algebra over $\mathbb{K}$. Then there exist an injection
$\varphi: L \rightarrow Der_{\mathbb{K}}R$ such that $Dim_R R \varphi(L)=k$ if and only if
there exist a subalgebra $\widehat{L} \subset R \otimes L$ of $R$-codimension $k$,
$\widehat{L} \cap 1 \otimes L=\{0\}$, a
set of elements $\bar{b}_1, \dots, \bar{b}_k \in R \otimes L$, $\widehat{L} + R \bar{b}_1 + \dots + R\bar{b}_k=R \otimes L$
and a set of linear independent over $R$ derivations $D_1, \dots D_k$ such that an $R$-module
$R(D_1 + \bar{b}_1)+ \dots + R(D_k + \bar{b}_k)+ \widehat{L}$ is closed under Lie bracket.
\end{lemma}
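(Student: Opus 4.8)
The statement is an iff, so I will prove the two directions separately, and the forward direction is essentially a packaging of the machinery already developed in Section 2 and the preceding Proposition.

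\textbf{($\Rightarrow$).}
Suppose $\varphi\colon L\to\Der_{\mathbb K}R$ is an injection with $\Dim_R R\varphi(L)=k$. Pick $l_1,\dots,l_k\in L$ with $\{\varphi(l_1),\dots,\varphi(l_k)\}$ an $R$-basis of $M=R\varphi(L)$, and set $D_i=\varphi(l_i)$; these are $R$-linearly independent by construction. Take $\widehat L=\widetilde L$ as in the Definition preceding Lemma \ref{elementsofsubalgebra}. By the Proposition, $\widehat L$ is an $R$-subalgebra of $R\otimes L$ of codimension $k$, and by Lemma \ref{intersection} $\widehat L\cap 1\otimes L=\{0\}$. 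For the complementary elements $\bar b_i$: each $\varphi(l_i)$ lies in $M$, so there are $r_{ij}\in R$ with $\varphi(l_i)=\sum_j r_{ij}\varphi(l_j)$ trivially, but that is not what I want; instead I use the proof of the Proposition directly — for $l=l_i$ itself the relation $\varphi(l_i)-\varphi(l_i)=0$ is useless, so rather I argue as follows. Since $\widehat L$ has codimension $k$ and $\widehat L\cap(1\otimes L)=\{0\}$, the images of $1\otimes l_1,\dots,1\otimes l_k$ in $(R\otimes L)/\widehat L$ are $R$-linearly independent (if $\sum r_i(1\otimes l_i)\in\widehat L$ then by Lemma \ref{elementsofsubalgebra} $\sum r_i\varphi(l_i)=0$, forcing all $r_i=0$), hence a basis; so set $\bar b_i=1\otimes l_i$, giving $\widehat L+\sum R\bar b_i=R\otimes L$. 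Finally, by Lemma \ref{elementsofsubalgebra} applied to the defining relation $\varphi(l_i)-\sum_j\delta_{ij}\varphi(l_j)=0$ — more precisely, $D_i+\bar b_i=\varphi(l_i)+1\otimes l_i=\varphi'(l_i)\in\varphi'(L)$ — so the $R$-module $\sum_i R(D_i+\bar b_i)+\widehat L$ is exactly $R\varphi'(L)$ intersected appropriately; in fact $R\varphi'(L)+\widetilde L=R\varphi'(L)$ since $\widetilde L\subset R\varphi'(L)$, and $R\varphi'(L)$ is closed under the bracket of $\mathfrak D$ because $\varphi'(L)$ is a Lie subalgebra and by Lemma 2 brackets of $R$-multiples of $\varphi'(L)$ stay in $R\varphi'(L)+[\varphi'(L),\varphi'(L)]\subset R\varphi'(L)$. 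This gives closure under Lie bracket.

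\textbf{($\Leftarrow$).}
Conversely, given $\widehat L$, the $\bar b_i$, and the $D_i$ with the stated properties, I build $\varphi$. The derivations $D_i\in\Der_{\mathbb K}R$ are already at hand; define $\varphi$ on a basis of $L$ by first choosing, for each basis vector $l$ of $L$, the unique expression (using $\widehat L+\sum R\bar b_i=R\otimes L$ and $\widehat L\cap 1\otimes L=\{0\}$) $1\otimes l=\hat l+\sum_i c_i(l)\bar b_i$ with $\hat l\in\widehat L$, $c_i(l)\in R$, and then decreeing $\varphi(l)$ to be the $\Der_{\mathbb K}R$-component of the element $\sum_i c_i(l)(D_i+\bar b_i)+\hat l$ of the closed-under-bracket $R$-module — i.e. $\varphi(l):=\sum_i c_i(l)D_i$. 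One checks $\varphi$ is a Lie homomorphism: the assignment $1\otimes l\mapsto \sum_i c_i(l)(D_i+\bar b_i)+\hat l$ lands in the $R$-module $W:=\sum_i R(D_i+\bar b_i)+\widehat L$, which is closed under bracket by hypothesis, and the bracket in $\mathfrak D(L,R,\Der_{\mathbb K}R)$ of two such elements, projected to $R\otimes L$, recovers $1\otimes[l,l']$ up to an element of $\widehat L$ — that the projection is multiplicative on $W$ is exactly the content of closure; then projecting to $\Der_{\mathbb K}R$ gives $[\varphi(l),\varphi(l')]=\varphi([l,l'])$. Injectivity of $\varphi$ follows from $\widehat L\cap(1\otimes L)=\{0\}$: if $\varphi(l)=\sum c_i(l)D_i=0$ then, since the $D_i$ are $R$-independent, all $c_i(l)=0$, so $1\otimes l=\hat l\in\widehat L\cap(1\otimes L)$, whence $l=0$. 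Finally $R\varphi(L)=\sum_i RD_i$ has $R$-dimension $k$ because the $D_i$ are $R$-linearly independent and each $\varphi(l)$ is an $R$-combination of them.

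\textbf{Main obstacle.}
The delicate point is the $(\Leftarrow)$ direction: verifying that $\varphi$ is genuinely a homomorphism of Lie algebras over $\mathbb K$ (not merely $R$-linear data), and in particular that the "coefficient functions" $c_i(\cdot)$ interact correctly with the bracket. The cleanest route is to work entirely inside $\mathfrak D=\mathfrak D(L,R,\Der_{\mathbb K}R)=\Der_{\mathbb K}R\rightthreetimes(R\otimes L)$: the hypotheses say precisely that $W$ is an $R$-submodule of $\mathfrak D$ closed under the bracket, projecting isomorphically (as $R$-module, via the second coordinate modulo $\widehat L$) onto $(R\otimes L)/\widehat L$, with a distinguished $\mathbb K$-linear section $L\hookrightarrow W$, $l\mapsto w(l)$, lifting $1\otimes l$. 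Since $\widehat L$ is a Lie ideal inside $W$? — no, only a subalgebra; so one must check $[w(l),w(l')]-w([l,l'])\in\widehat L$, using that both sides have the same image $1\otimes[l,l']$ in $(R\otimes L)/\widehat L$ after applying Lemma 2 and the fact that $\widehat L$ is a subalgebra with $[W,\widehat L]\subset W$. Granting that, the $\Der_{\mathbb K}R$-component of $w$ is the desired $\varphi$, and Lemma 2 (the Leibniz-type bracket formula) is what makes the bookkeeping of the $R$-coefficients routine rather than mysterious.
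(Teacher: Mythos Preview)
Your argument is correct and follows the same route as the paper: both directions work inside $\mathfrak D$, taking $\widehat L=\widetilde L$, $D_i=\varphi(l_i)$, $\bar b_i=1\otimes l_i$ for $(\Rightarrow)$, and for $(\Leftarrow)$ reconstructing $\varphi$ from the decomposition $R\otimes L=\widehat L\oplus\sum_i R\bar b_i$ so that $w(l)=\varphi(l)+1\otimes l\in W$. For the homomorphism check that worries you, the paper sidesteps the ideal question entirely by observing that $\{\varphi(l_s)+1\otimes l_s\}_{s=1}^{m}$ is an $R$-basis of $W$, writing $[\varphi(l_i)+1\otimes l_i,\varphi(l_j)+1\otimes l_j]=\sum_s c_{ij}^s(\varphi(l_s)+1\otimes l_s)$, and reading off from the $R\otimes L$-component that the $c_{ij}^s$ are precisely the structure constants of $L$ over $\mathbb K$; this is equivalent to your argument but avoids the detour through $(R\otimes L)/\widehat L$.
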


\begin{proof}
Assume that there exist a needed injection $L \rightarrow Der_{\mathbb{K}}R$.
Consider a Lie algebra $R\varphi'(L)$. Take an elements $l_1, \dots , l_k$ such
that $\varphi(l_i), i=1, \dots k$ are linearly independent over $\mathbb{K}$.
Then $R\varphi'(L)=R(\varphi(l_1)+ 1 \otimes l_1)+ \dots + R(\varphi(l_k)+ 1 \otimes l_k) + \widetilde{L}$.
Put $\widehat{L}=\widetilde{L}$. Assume that $r_1\otimes l_1 + \dots + r_k \otimes l_k \in \widetilde{L}$
for some $r_1, \dots, r_k \in R$. Then using Lemma \ref{elementsofsubalgebra} we have that
$r_1 \varphi(l_1) + \dots + r_k \varphi(l_k)=0$ with contradiction to linear independency of $\{\varphi(l_i), i=1, \dots, k\}$.
Therefore $\widehat{L} + R \otimes l_1 + \dots + R \otimes l_k=R \otimes L$.

Conversely, consider an $R$-basis of subalgebra $\widehat{L}$, $\widehat{L}=
\sum_{i=k+1}^m R \sum_{j=1}^m b_{ij} \otimes l_j$, $b_{ij} \in R$.
Put $\bar{b}_i=\sum _{j=1}^m b_{ij} \otimes l_j, i=i, \dots, k$.
Put $M=RD_1+ \dots + RD_k$. It is easy to see that
$M\simeq R(D_1 + \bar{b}_1)+ \dots + R(D_k + \bar{b}_k)+ \widehat{L}/\left(
R(D_1 + \bar{b}_1)+ \dots + R(D_k + \bar{b}_k)+ \widehat{L}\cap R \otimes L \right)$ is closed under Lie bracket.
Consider the following linear system:
\begin{equation}
\sum_{j=1}^m b_{ij} d_j=D_i, i=1 \dots k;
\sum_{j=1}^m b_{ij} d_j=0, i=k+1, \dots m.\label{systemofequations}
\end{equation}

It is easy to see that this system has a unique solution. Denote a linear map $\varphi$
from $L$ to $M$ on basis as $\varphi(l_i)=d_i, i=1, \dots, m$.
For this map we have that $\{\varphi(l_i) + l_i\}$
is a basis of $R$-module $R(D_1 + \bar{b}_1)+ \dots + R(D_k + \bar{b}_k)+ \widehat{L}$.
This module is closed under Lie bracket. Therefore we obtain:
 \[[\varphi(l_i) + l_i, \varphi(l_j) + l_j]=\sum_{k=1}^m c_{ij}^s (\varphi(l_s) + l_s).\]
 Hence:
 \[[\varphi(l_i), \varphi(l_j)]=\sum_{s=1}^m c_{ij}^s \varphi(l_s);\]
 \[[l_i, l_j]=\sum_{k=1}^m c_{ij}^k l_k.\]
 Thus we have that $ c_{ij}^s \in \mathbb{K}$, $i,j,s =1, \dots, m$. Therefore
 a $\mathbb{K}$-space generated by $\varphi(l_i), i =1, \dots m$ is closed under Lie bracket
 and a linear map defined on a basis as $l_i \rightarrow \varphi(l_i)$ is
 a morphism of Lie algebras. This completes the proof of Lemma.
\end{proof}

\begin{remark}\label{noninjective}
Analogously to proof of the previous Lemma we obtain that in conditions of this Lemma
without condition $\widehat{L} \cap 1 \otimes L$ we have an analogous morphism
$\varphi: L \rightarrow Der_{\mathbb{K}} R$ that is not injective.
\end{remark}

Consider a following algebraic variety.
\begin{definition}
Let $L$ be a finitely dimension Lie algebras, $Dim L=m$. Consider a following variety:
\begin{equation} \label{varietysubalgebra}
M_k{L}=\lbrace (a_{ij}), i=1 \dots m-k, j=1 \dots m | \sum_{i=1}^{m-k}\mathbb{K}\sum_{j=1}^m a_{ij} l_j
\rm{is ~a ~subalgebra}. \rbrace
\end{equation}

It is easy to see that it is an affine algebraic variety. We will denote it as a variety of subalgebra bases with
codimension $k$. Consider a following closed subspace of $M_k{L}$.

\begin{equation}
M_k^0{L}=\lbrace (a_{ij}), i=1 \dots m-k, j=1 \dots m |  \sum_{i=1}^{m-k}\mathbb{K}\sum_{j=1}^m a_{ij} l_j
\rm{is ~a ~subalgebra}
\end{equation}
\[
\rm{and} \it \lbrace ~\sum_{j=1}^{m} a_{ij}l_j \rbrace \rm{ ~ is ~linear ~dependent ~set }. \rbrace
\]

\end{definition}

For any point $(a_{ij})$ of $M_k(L)$ consider an associated subalgebra $ \sum_{i=1}^{m-k}\mathbb{K}\sum_{j=1}^m a_{ij} l_j$.
It has codimension $k$ if and only if $(a_{ij}) \notin M^0_k(L)$.

\begin{proposition} \label{subalgebratovariety}
Let $\mathbb{K}$ be an algebraically closed field.
Let $L$ be a finite dimensional Lie algebra over the field $\mathbb{K}$.
Assume that there exist an injection $\varphi: L \rightarrow Der_{\mathbb{K}} R$,
$k=Dim_R R \varphi(L)$. Then:

$(i)$ $L$ has a subalgebra of codimension $k$;

$(ii)$ there exist an irreducible subvariety $V$ of $M_k(L)$ such that $V \nsubseteq M^0_k(L)$ and
intersection of all associated subalgebras to points of $V$ is $\{0\}$.

\end{proposition}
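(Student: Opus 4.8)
The plan is to ``spread out'' the subalgebra $\widetilde{L}\subset R\otimes L$ produced earlier over a suitable irreducible affine $\mathbb{K}$-variety, and to take $V$ to be the closure of the resulting family of specialised subalgebras of $L$; the triviality of the intersection will come from $\widetilde{L}\cap(1\otimes L)=\{0\}$ (Lemma \ref{intersection}). First I would set up notation. Put $m=\dim_{\mathbb{K}}L$, fix a basis $l_1,\dots,l_m$ of $L$ with structure constants $[l_a,l_b]=\sum_t c_{ab}^{\,t}l_t$, and, using the Proposition at the beginning of this section, fix an $R$-basis $\bar b_1,\dots,\bar b_{m-k}$ of $\widetilde{L}$, written $\bar b_i=\sum_{j=1}^m b_{ij}\otimes l_j$ with $b_{ij}\in R$. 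The $(m-k)\times m$ matrix $B=(b_{ij})$ has rank $m-k$ over the field $R$ (its rows $\bar b_i$ are $R$-independent), so some $(m-k)$-rowed minor $\Delta$ of $B$ is nonzero. Since $\widetilde{L}$ is closed under the ($R$-bilinear) bracket of $R\otimes L$, there are $f_{ii'}^{i''}\in R$ with $[\bar b_i,\bar b_{i'}]=\sum_{i''}f_{ii'}^{i''}\bar b_{i''}$, that is, $\sum_{a,b}b_{ia}b_{i'b}c_{ab}^{\,t}=\sum_{i''}f_{ii'}^{i''}b_{i''t}$ for all $i,i',t$. Finally, let $R_0\subseteq R$ be the $\mathbb{K}$-subalgebra generated by all the $b_{ij}$, all the $f_{ii'}^{i''}$ and by $\Delta^{-1}$: it is a finitely generated $\mathbb{K}$-algebra and, being a subring of the field $R$, a domain; hence $X:=\mathrm{Spec}\,R_0$ is an irreducible affine $\mathbb{K}$-variety whose closed points, as $\mathbb{K}$ is algebraically closed, are exactly the $\mathbb{K}$-algebra homomorphisms $p\colon R_0\to\mathbb{K}$.

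Next I would examine the fibres. For a closed point $p$ of $X$ set $A^{(p)}=(p(b_{ij}))$ and $S_p=\sum_i\mathbb{K}\sum_j p(b_{ij})\,l_j\subseteq L$. Since $p(\Delta)\neq0$, the matrix $A^{(p)}$ has rank $m-k$, so $\dim_{\mathbb{K}}S_p=m-k$; and applying $p$ to the relations above shows $S_p$ is closed under the bracket of $L$. Thus $S_p$ is a subalgebra of $L$ of codimension exactly $k$, and the matrix $(p(b_{ij}))$ is a point of $M_k(L)$ not lying in $M^0_k(L)$; in particular $(i)$ follows. The assignment $a_{ij}\mapsto b_{ij}$ defines a morphism $\psi\colon X\to\mathbb{A}^{(m-k)m}$ sending every closed point of $X$ into the closed set $M_k(L)$. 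Let $V$ be the Zariski closure of this image of closed points; being the closure of the image of the irreducible variety $X$, it is an irreducible closed subvariety of $M_k(L)$, and since the dense subset $\{(p(b_{ij}))\}$ of $V$ is not contained in the closed set $M^0_k(L)$, we get $V\nsubseteq M^0_k(L)$.

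The real content is the claim that the intersection of the subalgebras associated to the points of $V$ is $\{0\}$, and for this I would argue by contradiction. Suppose $0\neq l=\sum_t\alpha_t l_t$, with $\alpha_t\in\mathbb{K}$, lies in $S_p$ for every closed point $p$ of $X$. Because $\mathrm{rank}\,A^{(p)}=m-k$, the condition $l\in S_p$ is equivalent to the vanishing of all $(m-k+1)$-rowed minors of the matrix obtained from $A^{(p)}$ by adjoining the row $(\alpha_1,\dots,\alpha_m)$; each such minor is the value at $p$ of the corresponding minor of the matrix obtained from $B$ by adjoining $(\alpha_1,\dots,\alpha_m)$, which is a definite element of $R_0$. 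So these elements of $R_0$ vanish at every closed point of $X$, hence lie in the Jacobson radical of $R_0$, which equals the nilradical because $R_0$ is a finitely generated $\mathbb{K}$-algebra, and which is $\{0\}$ because $R_0$ is a domain. Therefore all $(m-k+1)$-rowed minors of the matrix obtained from $B$ by adjoining $(\alpha_1,\dots,\alpha_m)$ vanish over $R$, and since $\mathrm{rank}_R B=m-k$ this forces $(\alpha_1,\dots,\alpha_m)=\sum_i r_i(b_{i1},\dots,b_{im})$ for some $r_i\in R$, i.e. $1\otimes l=\sum_i r_i\bar b_i\in\widetilde{L}$; but then $1\otimes l\in\widetilde{L}\cap(1\otimes L)=\{0\}$ by Lemma \ref{intersection}, forcing $l=0$, a contradiction. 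Hence $\bigcap_p S_p=\{0\}$ over the closed points of $X$; since each $S_p$ is the subalgebra associated to the point $\psi(p)$ of $V$, the intersection $\bigcap_{q\in V}S_q$ is contained in $\bigcap_p S_p=\{0\}$, so it equals $\{0\}$, which is $(ii)$.

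The one genuinely delicate point is the construction of $R_0$: it must be finitely generated over $\mathbb{K}$ and a domain --- so that the Nullstellensatz upgrades ``the minors vanish at all closed points'' to ``the minors vanish in $R_0$'' --- yet also large enough, containing the $b_{ij}$, the bracket coefficients $f_{ii'}^{i''}$ and $\Delta^{-1}$, that every fibre $S_p$ is honestly a subalgebra of codimension $k$. Once this balance is struck, the passage from ``$l\in S_p$ for all $p$'' to ``$1\otimes l\in\widetilde{L}$'' is just the rank criterion for a vector to lie in a row space, and the conclusion then follows from Lemma \ref{intersection}.
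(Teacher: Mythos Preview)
Your proof is correct and follows essentially the same approach as the paper: spread out the $R$-subalgebra $\widetilde{L}$ over a finitely generated $\mathbb{K}$-subdomain of $R$, map its spectrum into $M_k(L)$, and use the Nullstellensatz together with Lemma~\ref{intersection} to control the intersection of the specialised subalgebras. The only noteworthy difference is a tactical one: you adjoin $\Delta^{-1}$ (and the bracket coefficients $f_{ii'}^{i''}$) to $R_0$ so that \emph{every} fibre $S_p$ is a subalgebra of exact codimension $k$, making $(i)$ immediate and landing $\psi(X)$ entirely inside $M_k(L)\setminus M_k^0(L)$; the paper instead works with the subring generated by the $b_{ij}$ alone and argues by contradiction that the fibres cannot \emph{all} degenerate (else, by the Nullstellensatz, the maximal minors of $B$ would vanish in $R$). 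Your localisation is a harmless and arguably cleaner variant of the same idea.
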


\begin{proof}
Put $m=DimL$
Consider a subalgebra $\widetilde{L}$. It has codimension $k$. Let $\lbrace \sum_{j=1}^m z_{ij} l_j \rbrace$,
$i=1, \dots, m$ be a basis of $\widetilde{L}$. Let $S$ be a  ring generated by $\{z_{ij}\}$,
in is a subring of field and therefore integral domain.
Then we have an obvious injection $\tau: Spec S \rightarrow M_k(L)$. Assume that
all subalgebras associated to points of $\tau(Spec S)$ have codimension greater then $k$. Then
all maximal minors of $(p(z_{ij}))$ are equal to zero for any evaluation in point $p$. Hence using
Hilbert's Nullstellensatz (see, for example, \cite{Hartshorne}, p. 4) we have that all maximal minors of $(z_{ij})$ are equal to $0$.
Thus $\widetilde{L}$ has codimension greater then $k$. This completes the proof of part $(i)$.

Assume that for any closed point $p$ we have that $\langle \sum_{j=1}^m p(z_{ij})\otimes l_j \rangle, i=1 \dots m-k$
has an element $\sum_{j=1}^m \alpha_j\otimes l_j$, $\alpha_j \in \mathbb{K}$.
Then the set $\{\sum_{j=1}^m p(z_{ij})\otimes l_j\}\cup \{\sum_{j=1}^m \alpha_j \otimes l_j\}$
is linear dependent.
Analogously to the proof of $(i)$ we obtain that a set $\{\sum_{j=1}^m z_{ij}l_j\}\cup \{\sum_{j=1}^m \alpha_j \otimes l_j\}$
is linear dependent. Thus $\sum_{j=1}^m \alpha_j \otimes l_j$ is an element of $\widetilde{L}$ with
contradiction to Lemma \ref{intersection}. Thus completes the proof of part $(ii)$.
\end{proof}

\begin{lemma} \label{subalgtoderivations}
Let $\mathbb{K}$ be a field, $R$ be a field containing $\mathbb{K}$,
$L$ be a finite dimensional Lie algebra over $\mathbb{K}$. Assume
that there exist an $R$-subalgebra $\widehat{L} \subset R \otimes L$
of codimension $k$.
Then there exist a monomorphism $\varphi:L \rightarrow Der_{\mathbb{K}}F$,
where $F$ is a quotient field of $R[[x_1, \dots, x_k]]$ such that
$Dim_F F\varphi(L) =k$. If $L$ is nilpotent then $F$ can
be changed to $\mathbb{K}(x_1, \dots, x_k)$.
\end{lemma}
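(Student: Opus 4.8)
The plan is to realize $L$ by the infinitesimal action of $R\otimes L$ on the formal neighbourhood of the base point of the formal homogeneous space $(R\otimes L)/\widehat{L}$ --- a formal disc of dimension exactly $k$ --- and then to pass to the fraction field. First choose $l_1,\dots ,l_k\in L$ such that $1\otimes l_1,\dots ,1\otimes l_k$ is an $R$-basis of $(R\otimes L)/\widehat{L}$; this is possible because $R\otimes L\to (R\otimes L)/\widehat{L}$ is onto and the images of $1\otimes(\text{basis of }L)$ therefore $R$-span the rank-$k$ free module $(R\otimes L)/\widehat{L}$, so $k$ of them already form an $R$-basis. Put $A=R[[x_1,\dots ,x_k]]$ and $\widehat{L}_A=A\otimes_R\widehat{L}\subset A\otimes_{\mathbb{K}}L$. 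Define $\varphi\colon L\to Der_{\mathbb{K}}A$ by $\varphi(l)=\sum_{i=1}^{k}P_i^l\,\partial/\partial x_i$, where the $P_i^l\in A$ are read off from
\[
\exp\!\bigl(-\ad(\textstyle\sum_j x_j(1\otimes l_j))\bigr)(1\otimes l)\;\equiv\;\sum_{i=1}^{k}P_i^l\,(1\otimes l_i)\pmod{\widehat{L}_A};
\]
the exponential converges $x$-adically because $\ad(\sum_j x_j l_j)$ raises the total $x$-degree. That $\varphi$ is a homomorphism of Lie algebras is the standard fact about infinitesimal actions on formal homogeneous spaces, verified from the Baker--Campbell--Hausdorff formula; in the paper's language $\{\varphi(l)+1\otimes l:l\in L\}$ spans an $F$-submodule of $\mathfrak{D}(L,F,\sum_i F\,\partial/\partial x_i)$ closed under bracket, so $\varphi$ is exactly the map produced by Lemma~\ref{injectionconditions}. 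Finally set $F=\mathrm{Frac}(A)$.

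To obtain $Dim_F F\varphi(L)=k$: specializing $x=0$ gives $\varphi(l_i)|_{x=0}=\partial/\partial x_i$ for $i\le k$, so $\varphi(l_1),\dots ,\varphi(l_k)$ are linearly independent over $F$; on the other hand every $\varphi(l)$ lies in $\sum_i A\,\partial/\partial x_i$, an $F$-module of rank $k$. Hence $F\varphi(L)$ has dimension exactly $k$ over $F$.

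The main obstacle is injectivity of $\varphi$. Tracing through the construction, $\varphi(l)=0$ precisely when $1\otimes l$ acts trivially on the formal disc, i.e.\ when every iterated bracket $[l_{i_1},[\dots ,[l_{i_r},l]\dots]]$ maps into $\widehat{L}$; thus $\Ker\varphi=(1\otimes L)\cap\mathfrak{n}$, where $\mathfrak{n}$ is the largest ideal of $R\otimes L$ contained in $\widehat{L}$. So the argument goes through once $(1\otimes L)\cap\mathfrak{n}=\{0\}$. This point genuinely has to be available: $\mathfrak{n}$ is characteristic, so conjugating $\widehat{L}$ (for instance by $\exp(\ad\sum_j x_j l_j)$, which is what passing to exponential coordinates does) only shrinks $\widehat{L}\cap(1\otimes L)$ down to $\mathfrak{n}\cap(1\otimes L)$ and no further --- and the Heisenberg algebra with $k=1$, where every codimension-one subalgebra contains the centre, shows that the bare existence of a codimension-$k$ subalgebra is not sufficient. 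Under the condition $(1\otimes L)\cap\mathfrak{n}=\{0\}$, which is the condition actually in force whenever the Lemma is applied (cf.\ Theorem~1$(ii)$), $\varphi$ is the required monomorphism.

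For the nilpotent case, $R\otimes L$ is nilpotent, so $\ad(\sum_j x_j l_j)$ is nilpotent and the $P_i^l$ are polynomials rather than power series; moreover a nilpotent Lie algebra has a subalgebra of every codimension $0,\dots ,\dim L$, so one can run the whole construction with $R$ replaced by $\mathbb{K}$ and a codimension-$k$ subalgebra of $L$ itself, landing in $Der_{\mathbb{K}}\mathbb{K}[x_1,\dots ,x_k]\subset Der_{\mathbb{K}}\mathbb{K}(x_1,\dots ,x_k)$.
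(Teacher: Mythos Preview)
Your approach is the paper's: conjugate $N=\sum_i F\,\partial/\partial x_i+F\widehat{L}$ by $e^{\ad w}$ with $w=\sum_j x_j\otimes l_j$, observe that modulo the maximal ideal $J$ the resulting generators reduce to $\partial/\partial x_i-l_i$ and $\bar b_j$, and then feed this into Lemma~\ref{injectionconditions}. Your formal-homogeneous-space language is a repackaging of the same computation.

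Where you go beyond the paper is on injectivity, and you are right to flag it. The Lemma as stated omits the hypothesis $\widehat{L}\cap(1\otimes L)=\{0\}$, and the paper's own proof does not supply it either: it only shows $e^{\ad w}(F\widehat{L})\cap(1\otimes L)\subseteq\widehat{L}\cap(1\otimes L)$ and then invokes Lemma~\ref{injectionconditions}, which needs the right-hand side to vanish. Your Heisenberg example with $k=1$ is a genuine counterexample to the Lemma as written, since every codimension-one $R$-subalgebra of $R\otimes L$ contains the centre. Your identification of the kernel as $(1\otimes L)\cap\mathfrak{n}$, with $\mathfrak{n}$ the largest ideal of $R\otimes L$ contained in $\widehat{L}$, is sharper than what the paper records and is correct: $R\otimes\Ker\varphi$ is an ideal of $R\otimes L$ lying in $\widehat{L}$, giving one inclusion, and $\mathfrak{n}$ is $\ad w$-stable, giving the other. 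In the paper's only use of the Lemma (Theorem~1, $(ii)\Rightarrow(i)$) the missing hypothesis is checked immediately before the Lemma is invoked, so the application survives.

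One remark on the nilpotent clause. The paper keeps the given $\widehat{L}$ and uses nilpotency of $\ad w$ to land in $R(x_1,\dots,x_k)$ --- note the $R$, not $\mathbb{K}$, in the paper's proof, so there is a further mismatch between statement and proof. You instead discard $\widehat{L}$ and restart with a codimension-$k$ subalgebra of $L$ itself over $\mathbb{K}$; this does reach $\mathbb{K}(x_1,\dots,x_k)$ as the statement claims, but the injectivity caveat applies equally here (Heisenberg is nilpotent), so this clause is no more true without the extra hypothesis than the main one.
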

\begin{proof}
Let $l_1, \dots, l_k$ be an elements of $L$ such that
$R \otimes l_1 + \dots + R \otimes l_k + \widehat{L}= R \otimes L$.
Put $M=F \frac{\partial}{\partial x_1} + \dots +=F \frac{\partial}{\partial x_k}$,
$\mathfrak{D}=M \rightthreetimes F \otimes L$. Then we have a natural
embedding $R \otimes L \rightarrow F \otimes L$. Let $\bar b_{k+1}, \dots, \bar b_{m}$
be a basis of $\widehat{L}$. Then it is a basis of
$F \otimes \widehat{L}$. Put $w=x_1 \otimes l_1 + \dots + x_k \otimes l_k$.
$N=F \frac{\partial}{\partial x_1} + \dots + F\frac{\partial}{\partial x_k}+ F \widehat{L}$
is closed under Lie bracket. Then an $F$-module $e^{ad w}N$ is closed under Lie bracket.
It has a basis $\lbrace e^{ad w} \frac{\partial}{\partial x_1},
e^ {ad w} \frac{\partial}{\partial x_k}, e^{ad w} \bar{b}_{k+1}, \dots, , e^{ad w} \bar{b}_{m} \rbrace$.
Note that all these elements lie in $R[[x_1, \dots, x_k]]$ and:
\[e^{ad w} \frac{\partial}{\partial x_i}\equiv \frac{\partial}{\partial x_i}-1 \otimes l_i (mod J), i=1, \dots, k;\]
\[e^{ad w} \bar{b}_{i} \equiv \bar{b}_{i} (mod J),\]
where $J$ is an ideal of $R[[x_1, \dots x_k]]$ generated by $x_1, \dots, x_k$.
The elements $\{l_1, \dots, l_k, \bar{b}_{k+1}, \dots, \bar{b}_m\}$ are linear independent.
Therefore $e^{ad w} \frac{\partial}{\partial x_1}-\frac{\partial}{\partial x_1},\dots,
e^{ad w} \frac{\partial}{\partial x_k}-\frac{\partial}{\partial x_k}, e^{ad w}(\bar{b}_{k+1}), \dots,
e^{ad w}(\bar{b}_{m})$ are linear independent and thus give a basis of $R \otimes L$.
Analogously we obtain that if $0 \neq l \in e^{ad w}\widehat{L}\cap 1 \otimes L$
then $0 \neq l \in \widehat{L}\cap 1 \otimes L$.
Hence using Lemma \ref{injectionconditions} we complete the proof of Lemma.
If $L$ is nilpotent (more precisely if $l_1, \dots l_k$ lie in
subalgebra $L'$ such that $ad:L' \rightarrow Der(L)$ is nilpotent representation)
then $ad w$ is nilpotent derivation of $R[x_1, \dots, x_k] \otimes L$ and thus
$e^{ad w} \bar{x_i} \in R(x_1, \dots, x_k), i=k+1, \dots, m$.
Thus we can change $F$ to $R(x_1, \dots, x_k)$.
\end{proof}

\begin{theorem}
Let $L$ be a finite dimensional Lie algebra over an algebraically closed field $\mathbb{K}$.
Then the following conditions are equivalent:

$(i)$ there exists a field $R$ and an injective homomorphism $\varphi: L \rightarrow Der_{\mathbb{K}}R$
such that $Dim_R R \varphi(L)=k$;

$(ii)$ there exists an irreducible subvariety of $V \subset M_k(L)$,
$V \nsubseteq M^0_k(L)$, such that an intersection of subalgebras associated
to points of $V$ is equal to $\{0\}$.
\end{theorem}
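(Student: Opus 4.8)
The plan is to prove the two implications by assembling the results established earlier in the paper. The direction $(i) \Rightarrow (ii)$ is precisely part $(ii)$ of Proposition \ref{subalgebratovariety}: given a field $R$ and an injection $\varphi : L \to Der_{\mathbb{K}}R$ with $Dim_R R\varphi(L) = k$, one forms the subalgebra $\widetilde{L} \subset R \otimes L$, takes an $R$-basis $\{\sum_j z_{ij} l_j\}$ of it, lets $S$ be the (integral-domain) subring of $R$ generated by the coordinates $z_{ij}$, and uses the map $\tau : \operatorname{Spec} S \to M_k(L)$. The closure of $\tau(\operatorname{Spec} S)$ in $M_k(L)$ is irreducible since $S$ is a domain; call it $V$. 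Proposition \ref{subalgebratovariety}$(ii)$ already shows, via the Nullstellensatz argument, that $V \nsubseteq M^0_k(L)$ and that the intersection of all subalgebras associated to points of $V$ is $\{0\}$ — the latter precisely because a nonzero common element would, by the same minor-vanishing argument, lie in $\widetilde{L} \cap 1 \otimes L$, contradicting Lemma \ref{intersection}. So for this direction there is essentially nothing new to do beyond citing Proposition \ref{subalgebratovariety}.

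For the direction $(ii) \Rightarrow (i)$, I would start from the irreducible variety $V \subset M_k(L)$ with $V \nsubseteq M^0_k(L)$ and with trivial intersection of associated subalgebras. Let $R$ be the function field of $V$ (the quotient field of the coordinate ring of $V$, which is a domain because $V$ is irreducible). The generic point of $V$ gives a tuple $(z_{ij}) \in R^{(m-k)\times m}$, and since $V \nsubseteq M^0_k(L)$ the matrix $(z_{ij})$ has rank $m-k$ over $R$, so the $R$-span $\widehat{L} := \sum_{i} R\sum_j z_{ij}\otimes l_j \subset R \otimes L$ is an $R$-subalgebra of codimension exactly $k$. The crucial point is $\widehat{L} \cap 1 \otimes L = \{0\}$: a nonzero element $\sum_j \alpha_j \otimes l_j$ with $\alpha_j \in \mathbb{K}$ lying in $\widehat{L}$ would force a rank drop of the augmented matrix $(z_{ij})$ together with the row $(\alpha_j)$ over $R$, hence (clearing denominators and specializing, again by the Nullstellensatz) at every closed point $p$ of $V$, which means $\sum_j \alpha_j \otimes l_j$ lies in every subalgebra associated to points of $V$ — contradicting the hypothesis that this intersection is $\{0\}$. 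This is the step I expect to be the main obstacle: one has to argue carefully that a generic linear dependence over the function field $R$ propagates to a linear dependence at every closed point, which is where irreducibility of $V$ and the Nullstellensatz are both genuinely used.

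Once $\widehat{L} \subset R \otimes L$ is an $R$-subalgebra of codimension $k$ with $\widehat{L} \cap 1 \otimes L = \{0\}$, the rest is immediate: by Lemma \ref{subalgtoderivations} there is a field $F$ (the quotient field of $R[[x_1,\dots,x_k]]$) and a monomorphism $\varphi : L \to Der_{\mathbb{K}}F$ with $Dim_F F\varphi(L) = k$, which is exactly condition $(i)$. Strictly speaking Lemma \ref{subalgtoderivations} as stated does not record the condition $\widehat{L}\cap 1\otimes L=\{0\}$ in its hypotheses, but its proof produces, via the automorphism $e^{\operatorname{ad} w}$, a new subalgebra satisfying $e^{\operatorname{ad} w}\widehat{L}\cap 1\otimes L=\{0\}$ whenever $\widehat{L}\cap 1\otimes L=\{0\}$, and then feeds this into Lemma \ref{injectionconditions}, whose injectivity clause requires exactly that condition; so the chain of lemmas delivers an \emph{injective} $\varphi$, completing the proof. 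I would phrase the write-up so that $(i)\Rightarrow(ii)$ is one sentence citing Proposition \ref{subalgebratovariety}, and $(ii)\Rightarrow(i)$ is the construction of the function field $R$, verification of the three properties of $\widehat{L}$ (codimension $k$, trivial intersection with $1\otimes L$, spanning complement), and an appeal to Lemma \ref{subalgtoderivations}.
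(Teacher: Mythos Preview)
Your plan is correct and matches the paper's proof essentially line for line: $(i)\Rightarrow(ii)$ is a direct citation of Proposition~\ref{subalgebratovariety}, and $(ii)\Rightarrow(i)$ passes to the function field $R$ of $V$, builds $\widehat{L}\subset R\otimes L$ from the generic point, verifies codimension $k$ and $\widehat{L}\cap 1\otimes L=\{0\}$ via the Nullstellensatz/specialization argument, and then invokes Lemma~\ref{subalgtoderivations}. Your observation that the injectivity in Lemma~\ref{subalgtoderivations} implicitly relies on the condition $\widehat{L}\cap 1\otimes L=\{0\}$ (fed through to Lemma~\ref{injectionconditions}) is accurate and worth making explicit in the write-up.
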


\begin{proof}
$(i) \Rightarrow (ii)$ It is immediate consequence of Proposition \ref{subalgebratovariety}.

$(ii) \Rightarrow (i)$ Let $S$ be a coordinate ring of the variety $V$.
Put $S=\mathbb{K}[z_{ij}, i=1 \dots m-k, j=1 \dots m]/I=\mathbb{K}[\bar{z}_{ij}]$,
$\bar{z}_{ij}=z_{ij}+I$ such that for any closed point
$p$ of $S$ we have $\lbrace \sum_{j=1}^m p(\bar{z}_{ij})l_j \rbrace$ be
a basis of subalgebra of codimension $k$. $V$ is irreducible. Hence $S$ is
an integral domain. Let $R$ be a quotient field of $S$. Then
$\lbrace \sum_{j=1}^m p(\bar{z}_{ij})l_j \rbrace$ is a basis of subalgebra
 $\widehat{L} \subset R \otimes L$. Assume that this set is linear dependent.
Then it is linear dependent in every point of $V$. Thus
we have that $V \subset M^0_k{L}$ with contradiction to $(ii)$. Therefore
$\widehat{L}$ has codimension $k$. Finally if $l\in 1 \otimes L \cap \widehat{L}$
then $l$ lies in all subalgebras associated to points $S$. Thus using Lemma \ref{subalgtoderivations} we obtain $(i)$.
\end{proof}

\section{Case of simple Lie algebras}
\begin{proposition} \label{simple}
Let $\mathbb{K}$ be an arbitrary field, $L$ be a simple Lie algebra over $\mathbb{K}$.
Assume that $L=L_1+L_2$, where $L_1$ has codimension $k$ and a representation $ad:L_2\rightarrow Der L$
is nilpotent. Then there exists a subalgebra $\varphi(L) \subset Der_{\mathbb{K}} \mathbb{K}(x_1, \dots x_k)$
such that $Dim_{\mathbb{K}(x_1, \dots x_k)} \mathbb{K}(x_1, \dots x_k) \varphi(L)=k$.
\end{proposition}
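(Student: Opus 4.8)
The plan is to reduce Proposition \ref{simple} directly to Lemma \ref{subalgtoderivations} by verifying its hypotheses. Since $L$ is simple and $L_1 \subsetneq L$ is a proper subalgebra of codimension $k$, the subalgebra $L_1$ cannot be an ideal, so there is nothing degenerate about it; what we need is an $R$-subalgebra $\widehat{L} \subset R \otimes L$ of codimension $k$ for some field $R \supseteq \mathbb{K}$, and then the nilpotency of $\ad : L_2 \to \Der L$ will let us take $F = \mathbb{K}(x_1, \dots, x_k)$ rather than the quotient field of a power series ring. The natural choice is $R = \mathbb{K}$ itself and $\widehat{L} = \mathbb{K} \otimes L_1 \subset \mathbb{K} \otimes L = L$: this is a subalgebra of codimension $k$ by hypothesis. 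The intersection condition $\widehat{L} \cap 1 \otimes L = \{0\}$ in the statement of Lemma \ref{subalgtoderivations} is not actually required there (only codimension is), so that is not an obstacle; rather, the point where simplicity enters is in guaranteeing that the resulting morphism $\varphi$ is injective, since $\Ker \varphi$ is an ideal of $L$ and $L$ is simple, so $\varphi$ is either injective or zero, and it cannot be zero because $\dim_F F\varphi(L) = k > 0$.

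First I would fix a basis $l_1, \dots, l_m$ of $L$ adapted so that $l_{k+1}, \dots, l_m$ span $L_1$ and $l_1, \dots, l_k$ span a complement; then $\widehat{L} = \mathbb{K}\otimes L_1$ satisfies $R\otimes l_1 + \dots + R\otimes l_k + \widehat{L} = R\otimes L$ trivially, with $R = \mathbb{K}$. Second, I would invoke Lemma \ref{subalgtoderivations}: it produces a monomorphism into $\Der_{\mathbb{K}}F$ with $\dim_F F\varphi(L) = k$, where $F$ is a priori the quotient field of $R[[x_1, \dots, x_k]] = \mathbb{K}[[x_1, \dots, x_k]]$. Third, to replace $F$ by $\mathbb{K}(x_1, \dots, x_k)$, I would check the nilpotency clause in the proof of that lemma: there, $F$ can be taken to be $R(x_1, \dots, x_k)$ provided the chosen elements $l_1, \dots, l_k$ lie in a subalgebra $L'$ on which $\ad$ acts nilpotently on $L$. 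Here that role is played by $L_2$ — by hypothesis $\ad : L_2 \to \Der L$ is a nilpotent representation, and we may arrange $l_1, \dots, l_k \in L_2$ since $L = L_1 + L_2$ forces $L_2$ to surject onto the quotient $L/L_1$, so a complement to $L_1$ can be chosen inside $L_2$. Then $w = x_1 \otimes l_1 + \dots + x_k \otimes l_k$ gives a nilpotent $\ad w$ on $R[x_1, \dots, x_k] \otimes L$, so $e^{\ad w}$ is polynomial and $F = \mathbb{K}(x_1, \dots, x_k)$ suffices.

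The one genuine point requiring care — and the step I expect to be the main obstacle — is the choice of the complement $l_1, \dots, l_k$ \emph{simultaneously} inside $L_2$ and transverse to $L_1$. This is possible precisely because $L_1 + L_2 = L$: the composite $L_2 \hookrightarrow L \twoheadrightarrow L/L_1$ is surjective, so picking $l_1, \dots, l_k \in L_2$ whose images form a basis of $L/L_1$ does the job, and these then automatically lie in the subalgebra $L_2$ on which $\ad$ is nilpotent. Once this is set up, everything else is a direct appeal to Lemma \ref{injectionconditions} (via Lemma \ref{subalgtoderivations}) together with the simplicity of $L$ to upgrade "morphism" to "monomorphism"; since $\Ker\varphi \trianglelefteq L$ and $L$ is simple, $\varphi$ is injective as soon as it is nonzero, which it is because $\dim_F F\varphi(L) = k \geq 1$. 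I would close by noting that $\varphi(L) \subset \Der_{\mathbb{K}}\mathbb{K}(x_1,\dots,x_k)$ with the asserted dimension, completing the proof.
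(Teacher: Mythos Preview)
Your proposal is correct and follows essentially the same strategy as the paper: take $\widehat{L}=L_1$, choose the complement $l_1,\dots,l_k$ inside $L_2$ so that $w=\sum x_i\otimes l_i$ acts nilpotently, conjugate by $e^{\ad w}$ to produce a morphism via the machinery of Lemma~\ref{injectionconditions}/Remark~\ref{noninjective}, and then invoke simplicity of $L$ to force injectivity. The only cosmetic difference is packaging: the paper redoes the $e^{\ad w}$ construction explicitly and cites Remark~\ref{noninjective}, whereas you route through Lemma~\ref{subalgtoderivations} together with the parenthetical refinement in its proof (the clause about $l_1,\dots,l_k$ lying in a subalgebra with nilpotent adjoint action, which is exactly your $L_2$).
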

\begin{proof}
Denote $R=\mathbb{K}(x_1, \dots x_k)$.
Denote $M=Der_{\mathbb{K}} \mathbb{K}(x_1, \dots x_k)$,
$\mathfrak{D}=M \rightthreetimes \mathbb{K}(x_1, \dots x_k) \otimes L$.
Put $w= x_1 \otimes l_1 + \dots + x_k \otimes l_k$. It is easy to see that $ad w$ is a
nilpotent derivation of $\mathfrak{D}$.
Put $N=R\frac{\partial}{\partial x_1}, \dots, R\frac{\partial}{\partial x_k}+R \otimes L_1$ and consider an
$R$-module $e^{ad w} N$. Let $\{l_{k+1}, \dots, l_m\}$ be a basis of $L_1$. Then
we have that $\lbrace e^{ad w}(\frac{\partial}{\partial x_1}),
\dots, e^{ad w}(\frac{\partial}{\partial x_k}), e^{ad w} l_{k+1}, \dots, e^{ad w} l_m\rbrace$
is a basis of $e^{ad w} N$. Note that all these elements lie in polynomial ring $\mathbb{K}[x_1, \dots, x_k]$
and $e^{ad w}(\frac{\partial}{\partial x_i})-\frac{\partial}{\partial x_i}\equiv -l_i mod J$, $i=1, \dots, k$,
$e^{ad w}l_j \equiv l_j mod J$, $j=k+1, \dots, m$, where $J$ is an ideal of $\mathbb{K}[x_1, \dots, x_k]$,
generated by $x_1, \dots, x_k$. Thus we have that
$e^{ad w}(\frac{\partial}{\partial x_i})-\frac{\partial}{\partial x_i}, e^{ad w}l_j$ give an $R$ basis of
$R \otimes L$. Using Remark \ref{noninjective} we have a morphism
$\varphi: L \rightarrow Der_{\mathbb{K}}R$. It is injective because $L$ is simple. This completes
the proof of Proposition.
\end{proof}

\begin{corollary}
There exist an injection $\varphi:sl_n{\mathbb{K}} \rightarrow Der_{\mathbb{K}} \mathbb{K}(x_1, \dots, x_{n-1})$.
\end{corollary}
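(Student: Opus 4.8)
The plan is to derive this corollary from Proposition~\ref{simple} by exhibiting, for $L=\sl_n(\mathbb{K})$, a decomposition $L=L_1+L_2$ in which $L_1$ has codimension $n-1$ and $\ad\colon L_2\to\Der L$ acts nilpotently. The natural candidate is to take $L_1$ to be a parabolic-type subalgebra. Concretely, let $L_1$ be the subalgebra of trace-zero matrices whose first column below the $(1,1)$-entry vanishes, i.e.\ matrices of the form where the entries $x_{21},x_{31},\dots,x_{n1}$ are all zero; this is the stabilizer (up to scalars) of the line $\mathbb{K}e_1\subset\mathbb{K}^n$, and a direct count shows it has dimension $(n^2-1)-(n-1)$, so codimension $n-1$ in $\sl_n(\mathbb{K})$. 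For $L_2$ I would take the span of the $n-1$ elementary matrices $e_{21},e_{31},\dots,e_{n1}$ (the root vectors for the first-column root spaces). Then $L_1+L_2=L$ is clear from the matrix description.

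The key step is to verify that $\ad\colon L_2\to\Der L$ is nilpotent, i.e.\ that every element of $L_2$ is an $\ad$-nilpotent element of $\sl_n(\mathbb{K})$ and, more precisely, that the associative subalgebra of $\mathrm{End}(L)$ generated by $\ad L_2$ consists of nilpotent operators. Since each $e_{i1}$ is a nilpotent matrix (it is strictly ``lower-triangular'' after the obvious permutation, and in any case $e_{i1}^2=0$ for $i\neq 1$), and since the product $e_{i1}e_{j1}=0$ for all $i,j\neq 1$, the span $L_2$ is in fact an \emph{abelian} subalgebra consisting of square-zero matrices. For a set of commuting nilpotent matrices, left and right multiplications on the matrix algebra commute and are nilpotent, so $\ad x=L_x-R_x$ is nilpotent for each $x\in L_2$, and moreover all these $\ad x$ commute with one another; hence the subalgebra they generate is a commutative algebra of nilpotent operators, which is nilpotent as required. (Equivalently: one checks that $L_2$ is contained in the nilradical of the parabolic $L_1$, on which the adjoint action is manifestly nilpotent.)

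With the decomposition in hand, Proposition~\ref{simple} applies directly with $k=n-1$: it produces a subalgebra $\varphi(\sl_n(\mathbb{K}))\subset \Der_{\mathbb{K}}\mathbb{K}(x_1,\dots,x_{n-1})$ with $\Dim_{R}R\varphi(L)=n-1$, where $R=\mathbb{K}(x_1,\dots,x_{n-1})$. In particular $\varphi$ is an injection $\sl_n(\mathbb{K})\to\Der_{\mathbb{K}}\mathbb{K}(x_1,\dots,x_{n-1})$, which is exactly the assertion of the corollary. I expect the only real point requiring care to be the nilpotency of the representation $\ad\colon L_2\to\Der L$ — one must resist the temptation to check merely that each generator is $\ad$-nilpotent and instead confirm nilpotency of the generated associative algebra, which here is painless precisely because $L_2$ is abelian and square-zero; the codimension count for $L_1$ and the decomposition $L=L_1+L_2$ are routine.
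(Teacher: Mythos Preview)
Your argument is correct and follows essentially the same strategy as the paper: apply Proposition~\ref{simple} with $L_1$ a maximal parabolic of codimension $n-1$ and $L_2$ a nilpotent complement, then invoke simplicity for injectivity. The paper phrases this in root-system language (taking $L_1=h+n_-+\tilde n_+$ and $L_2=n_+$), while you work with explicit matrices and choose the smaller abelian nilradical $\langle e_{21},\dots,e_{n1}\rangle$ for $L_2$; your explicit check that $\ad|_{L_2}$ is a nilpotent \emph{representation} (not merely that each generator is $\ad$-nilpotent) is a point the paper leaves implicit.
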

\begin{proof}
Let $\alpha_1, \dots, \alpha_{n-1}$ be a basis of root system, $sl_n(\mathbb{K})=n_-+h+n_+$,
where $h$ is a Cartan subalgebra, $n_+$ ($n_-$) is a linear span of all positive (negative) root
vectors. Put $L_1= h+n_-+\tilde{n_+}$ where $\tilde{n_+}$ is a subalgebra generated
by root vectors with roots in $\alpha_1, \dots, \alpha_{n-2}$. It is easy to see that $L_1$ is a
subalgebra and it has codimension $n-1$. $L=L_1+n_-$. Thus using Proposition \ref{simple}
we complete the proof of Lemma.
\end{proof}

\begin{example}
Let $L=sl_3(\mathbb{K})$, $\pm \alpha, \pm \beta, \pm (\alpha + \beta)$ be a root system.
Let $L_1=\langle h_{\alpha}, h_{\beta}, e_{-\alpha}, e_{-\beta}, e_{-\alpha-\beta}, e_{\beta} \rangle$.
Put $w=x_1 \otimes e_{\alpha}+x_2 \otimes e_{\alpha+\beta}$. Then:
\[e^{ad w} \frac{\partial}{\partial x_1}=\frac{\partial}{\partial x_1}-1 \otimes e_{\alpha},\]

\[e^{ad w} \frac{\partial}{\partial x_2}=\frac{\partial}{\partial x_2}-1 \otimes e_{\alpha+\beta},\]

\[e^{ad w} h_{\alpha}=1\otimes h_{\alpha}-2 x_1 \otimes e_{\alpha}-x_2 \otimes e_{\alpha+ \beta},\]

\[e^{ad w} h_{\beta}=1\otimes h_{\beta}+ x_1 \otimes e_{\alpha}-x_2 \otimes e_{\alpha+ \beta},\]

\[e^{ad w} e_{-\beta}=1\otimes e_{-\beta}-x_2 \otimes e_{\alpha},\]

\[e^{ad w} e_{\beta}=1\otimes e_{\beta}-x_1 \otimes e_{\alpha+ \beta},\]

\[e^{ad w} e_{-\alpha}=1\otimes e_{-\alpha}+x_1 \otimes h_{\alpha}+x_2 \otimes e_{\beta}-x_1^2 \otimes e_{\alpha}
-x_1x_2 \otimes e_{\alpha+\beta},\]

\[e^{ad w} e_{-\alpha-\beta}=1\otimes e_{-\alpha-\beta}+ x_1 \otimes e_{-\beta}+x_2 \otimes (h_{\alpha}+h_{\beta})
-x_2^2 \otimes e_{\alpha+\beta}-x_1 x_2 \otimes e_{\alpha}.\]

Then solving the linear system \ref{systemofequations} we have:

\[\varphi(e_{\alpha})=-\frac{\partial}{\partial x_1}, \varphi(e_{\alpha+\beta})=-\frac{\partial}{\partial x_2},
\varphi(h_{\alpha})=-2x_1 \frac{\partial}{\partial x_1} -x_2 \frac{\partial}{\partial x_2},
\varphi(h_{\beta})=x_1 \frac{\partial}{\partial x_1} -x_2 \frac{\partial}{\partial x_2},\]
\[\varphi(e_{\beta})=-x_1 \frac{\partial}{\partial x_2}, \varphi(e_{-\beta})=-x_2 \frac{\partial}{\partial x_1},
\varphi(e_{-\alpha})=-x_1^2 \frac{\partial}{\partial x_1}-x_1 x_2 \frac{\partial}{\partial x_2},\]
\[\varphi(e_{-\alpha-\beta})=-x_1x_2 \frac{\partial}{\partial x_1}-x_2^2 \frac{\partial}{\partial x_2}.\]
\end{example}

\end{document}